\newtheorem{theorem}{Theorem}[section]
\newtheorem{lemma}{Lemma}[section]
\newtheorem{corollary}{Corollary}[section]
\newtheorem{claim}{Claim}[section]
\newcommand{\RR}{{\mathbb{R}}}
\newcommand{\ZZ}{{\mathbb{Z}}}
\subjclass{Primary  53A10, Secondary 57R22, 57M50 }
\keywords{hyperbolic geometry, minimal surface, three-dimensional manifold, surface bundle over the circle}
\begin{document}
\title{Minimal and Quasi-Minimal Fibrations of Hyperbolic 3-Manifolds}
\author{Joel Hass}
\address{Department of Mathematics, University of California, Davis}
\email{jhass@ucdavis.edu}
\thanks{Partially supported by NSF grant 1117663 and the Ambrose Monell Foundation}
\begin{abstract}
There are hyperbolic 3-manifolds that  fiber
over the circle but that do not admit  minimal fibrations by minimal surfaces. Furthermore, these manifolds do not
admit fibrations by surfaces that are even approximately minimal.
\end{abstract}
 \maketitle

\section{Introduction} 

\noindent
Thurston showed that a 3-manifold that is a bundle over $S^1$ with fiber a surface of
 genus $g,~ g \ge 2$ and with pseudo-Anosov monodromy admits a
hyperbolic metric. He conjectured that all hyperbolic 3-manifolds
are finitely covered by such bundles \cite{Thurston} and this was later proved by Agol \cite{Agol}. 
One consequence is that understanding the geometry of hyperbolic surface bundles
is central to understanding the geometry of general hyperbolic 3-manifolds.

In this article we describe a construction that gave the first examples 
of fibered hyperbolic 3-manifolds that
do not admit fibrations in which each fiber is a minimal surface. We further
show that the fibers of these  fibrations cannot be made
even approximately minimal, in a sense defined below.

An embedded, orientable, incompressible surface in a closed, orientable,
Riemannian 3-manifold is homotopic to a surface of least area \cite{SacksUhlenbeck, SchoenYau}, and
this surface is either embedded or double covers an embedded one-sided surface \cite{FHS}.
It follows that  in a fibered hyperbolic 3-manifold there is an embedded minimal surface homotopic to a  fiber. 
It is  not known whether in some fibrations the fibers can be isotoped so that all are minimal, giving a minimal fibration.

Minimal foliations of non-compact hyperbolic manifolds do exist,
so the obstruction to a minimal fibration is not local.
Hyperbolic space itself can be minimally foliated 
in many different ways. One such foliation is given by totally
geodesic planes whose limit sets form parallel meridian circles on the sphere at infinity, 
foliating the 2-sphere with its two poles removed by meridians. A large class of
minimal foliations can be constructed by perturbing the meridians
to curves that remain transverse to longitudes and then taking a family of
least area planes that limit to these curves. Such least area planes exist
\cite{ Anderson}. This family forms a foliation, since the least area planes spanning disjoint
curves are disjoint and  an application
of the maximum principle shows that there are no gaps between planes.    
This construction can be made equivariant under a hyperbolic translation that
preserves the two poles, giving a minimal fibration over the circle with planar fibers.
No known construction gives minimal fibers of finite area.
    
Corollary~\ref{nomf} states that 
many hyperbolic 3-manifolds that fiber over $S^1$ do not admit minimal fibrations.
This corollary follows from Theorem~\ref{noqam}, which shows the
non-existence of fibrations whose fibers are even approximately minimal. We now make this
concept precise.

Let $M$ be a smooth Riemannian manifold
and $X \subset M$ a compact surface in $M$, either closed or with boundary. 
We set
$$
 \mathcal{I}_{X}   = \inf \{\mathrm{Area}(G) ~|~G \subset M  \mbox{ is a smooth surface homologous to } X  \mbox{ (rel }  \partial G) \}.
$$
A surface $ F$ is called  {\em area minimizing} if for  any compact subsurface  $X$  of $F$,
$$
\mathrm{Area}(X) = \mathcal{I}_{X}.
$$

Let  $\mu$ and $  \lambda$ be constants  with $ 0 \le \mu,  1 \le   \lambda$.
A surface  $F \subset M$  is  {\em $(\mu, \lambda)$-quasi-area-minimizing} if 
\begin{enumerate}
\item  The mean curvature $H$ of $F$ satisfies $|H|  \le  \mu$,
\item  For any compact subsurface  $X \subset F$, $ \mathrm{Area} (X) \le  \lambda \cdot \mathcal{I}_{X}  $.
\end{enumerate}

\begin{theorem} \label{noqam}
For any constants $\mu < 1$ and $  \lambda \ge 1$ and for any genus $g \ge 2$, there are hyperbolic 3-manifolds that are genus-$g$ surface bundles over $S^1$ and that admit no fibration whose 
fibers are $(\mu, \lambda)$-quasi-area-minimizing surfaces.
\end{theorem}
  
We prove Theorem~\ref{noqam} in  Section~\ref{thmproof}.
 Note that a  $(\mu, \lambda)$-quasi-area-minimizing surface is also a $(\mu', \lambda')$-quasi-area-minimizing surface
 for any $\mu' \ge  \mu$ and $\lambda' \ge  \lambda$.  In particular, we have the following result.
  
\begin{corollary} \label{nomf}
There are hyperbolic 3-manifolds that fiber over $S^1$  that do not admit a minimal fibration.
 \end{corollary}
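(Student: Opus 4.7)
The plan is to derive the corollary as a direct logical consequence of Theorem~\ref{noqam}: I will check that every fibration by minimal surfaces is in particular $(a,b)$-quasi-area-minimizing for some choice $a<1$, $b<2$, so that the non-existence statement for the latter (wider) class immediately implies non-existence for the class of minimal fibrations.

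Both defining conditions are essentially automatic. For condition (1), a minimal surface has mean curvature identically zero, so $|H|<a$ is satisfied for any positive $a$; I take $a=1/2$. For condition (2), I invoke the result of Sullivan \cite{Sullivan} already cited in the introduction, which states that each leaf $F$ of a minimal fibration attains the infimum of area in its homology class. The key observation is that this global area-minimizing property descends to every compact subsurface $X\subset F$: if some surface $G$ homologous to $X$ rel $\partial X$ satisfied $\mbox{Area}(G)<\mbox{Area}(X)$, then $(F\setminus X)\cup G$ would be a surface homologous to $F$ with strictly smaller total area, contradicting Sullivan. Hence $\mbox{Area}(X)=\mathcal{I}_X$ for every such $X$, and condition (2) holds with any $b>1$; I take $b=3/2$.

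With $a=1/2$ and $b=3/2$ fixed, Theorem~\ref{noqam} supplies a hyperbolic 3-manifold fibered over $S^1$ that admits no $(a,b)$-quasi-area-minimizing fibration, and therefore a fortiori no minimal fibration, which is what is required. There is no substantive obstacle in this deduction; all of the geometric content, in particular the construction of bundles with arbitrarily short null-homologous geodesics, has been absorbed into Theorem~\ref{noqam}, and what remains is only to unpack definitions together with the elementary cut-and-paste argument above.
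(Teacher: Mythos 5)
Your proof is correct and follows essentially the same route as the paper: both reduce Corollary~\ref{nomf} to Theorem~\ref{noqam} by observing that a minimal fiber has $H\equiv 0$ (so condition~(1) holds for any $a>0$) and, by Sullivan's result, is area minimizing in its homology class (so condition~(2) holds for any $b>1$). Two small points in your favor: you explicitly supply the cut-and-paste argument showing that the global area-minimizing property of a leaf localizes to compact subsurfaces, which the paper leaves implicit; and you instantiate the theorem with $a=1/2$, $b=3/2$, which sit strictly inside the hypotheses $a<1$, $b<2$, whereas the paper's phrasing ``each fiber is a $(1,2)$-quasi-area-minimizing surface'' uses the boundary values and really means that the fibers are $(a,b)$-quasi-area-minimizing for every admissible $(a,b)$, as you made explicit.
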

 \begin{proof} 
The surfaces in a minimal fibration are each area minimizing in their homology class \cite{Sullivan}
and each has mean curvature zero.
So each fiber is a (0,1)-quasi-area-minimizing surface, and it follows that the manifolds
 in Theorem~\ref{noqam} admit no  minimal fibrations.
\end{proof} 

The obstruction to a minimal fibration comes from the  geometry of a hyperbolic manifold
near a short geodesic. Thurston's characterization of hyperbolic surface bundles
points the way to the construction of hyperbolic surface bundles 
that contain arbitrarily short, null-homologous geodesics. Near a short geodesic the
geometry of a hyperbolic  3-manifold
resembles that of a cusp. Direct estimates
show that the area of an incompressible surface going far into a cusp is
larger than that of a homotopic surface that penetrates the cusp less deeply.
One consequence is that such a surface cannot be least area in its homology class. On the
other hand, a leaf of a minimal fibration has no greater area than any homologous surfaces \cite{Sullivan}. 
We construct examples where fibers must go arbitrarily deeply into a neighborhood of a short geodesic, 
but can be homotoped out of the neighborhood. A minimal fibration cannot exist in these manifolds.
In this paper we have not attempted to obtain explicit 
estimates on the lengths of shortest geodesics that provide obstructions. 
Some explicit estimates were given by Huang and Wang \cite{HuangWang2} in the case of a minimal fibration.  
We remark that Wolf and Yu showed the nonexistence of  minimal foliations in the special case where the leaves evolve under  
a geometric flow \cite{Wolf}, a normal flow determined by the principal curvature of the leaves.  
Other results concerning minimal surfaces in cusps can
be found in \cite{CollinHauswirthRosenberg,  Millichap, MoriahRubinstein, Ruberman}.  
The relation between minimal surfaces and short geodesics was studied
by Breslin \cite{Breslin}. The extension of the investigation of this question from minimal to quasi-minimal surfaces
was not previously studied.

The existence of minimal fibrations is closely related to the question
of whether there exist non-isolated minimal surfaces in hyperbolic 3-manifolds,
and to the existence of unstable minimal surfaces.
A foliation of a Riemannian 3-manifold with 2-dimensional leaves is {\em taut}
if each leaf intersects a closed transversal curve. Fibrations give one class of
examples of taut foliations. Taut foliations were studied by Novikov, who
showed among other results that each leaf of such a foliation is incompressible
\cite{Novikov}. Sullivan showed that a smooth foliation of a  3-manifold is taut if and only if there is a
Riemannian metric on the manifold in which each leaf is a minimal surface \cite{Sullivan}. A consequence
of Sullivan's theorem is that a surface bundle over $S^1$ admits some
Riemannian metric in which each fiber is minimal. Corollary ~\ref{nomf} shows
that Sullivan's construction is often not compatible with a hyperbolic
metric.

\noindent
{\bf Acknowledgements}
The results in this paper originated in conversations between the author and Bill Thurston
at the 1984 Durham Symposium on {\em Kleinian groups, 3-Manifolds, and Hyperbolic
Geometry}.  The generalization to quasi-area-minimizing surfaces was written while the author was visiting the
School of Mathematics, Institute for Advanced Study in 2015. 

\section{Surfaces in cusps}

We review some standard facts about the geometry of cusps of hyperbolic 3-manifolds.
An  {\em ideal hyperbolic cusp} $C$ is a hyperbolic 3-manifold homeomorphic to $T^2 \times \RR$,
obtained as the quotient of $\mathbb H^3$ by a parabolic subgroup $\Gamma$ of $\mathrm{PSL}(2,\mathbb C)$ isomorphic to $\ZZ \oplus \ZZ$.
In the upper half-space model of $\mathbb H^3$ 
the generators of $\Gamma$ act as translations of the $xy$-plane. 
A fundamental domain for the cusp is $Q \times (0,\infty)$,
where $Q$ is a parallelogram on the $xy$-plane. The cusp is foliated by flat horotori $T_s$, 
with $T_s$ covered by the horosphere $ \{ z= s \} $.
 
Conjugating $\Gamma$ by  the isometry $z \to \lambda z, ~\lambda \in \RR^+$, takes the horosphere
$ \{ z= s \} $ to the horosphere $ \{ z= \lambda s \} $. 
We can use this conjugacy to arrange that the horotorus whose shortest nontrivial curve has length one lifts to the plane $ \{ z=1 \} $ in 
the upper half-space model for $\mathbb H^3$, 
so that $T_1$ has injectivity radius 1/2. 
The lengths of curves on the horotori $T_s$ 
decrease linearly with $s$, so that the shortest curve on  $T_s$ has length $1/s$, or equivalently,
the injectivity radius of $T_s$ satisfies $ i_s   = 1/(2s) $ for all $s\ge 1$.
Let $C_{[a,b]}$ denote the portion of the cusp consisting of $T_s$ with $a\le s \le b$  
and $C_{[a,\infty)}$ the end of the cusp cut off by $T_a$.

\begin{lemma} \label{trivial1}
Let $D$ be a smooth disk in an ideal cusp $C$ with $\partial D \subset T_s$ whose boundary has length  $ l$.
Then either  $D  \subset C_{[s,s +(sl^2)/4]}$ or $D$ has an interior point where its
mean curvature satisfies $|H| \ge 1$.
\end{lemma}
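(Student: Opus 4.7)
The plan is to lift $D$ to the universal cover of the cusp and apply the maximum principle twice, with two different families of horospheres as barriers. Since $D$ is a disk in the cusp $C\simeq T^{2}\times\RR$ and the inclusion $T_s\hookrightarrow C$ is a homotopy equivalence, $\partial D$ is null-homotopic on $T_s$, and so $D$ lifts to an embedded disk $\tilde{D}$ in the upper half-space model of $H^3$ whose boundary is a simple closed curve in the horosphere $\{z=s\}$ of Euclidean length $ls$. Assume for contradiction that $|H_D|<1$ at every interior point of $D$; the goal is to show $\tilde{D}\subset\{s\le z\le s+sl^{2}/4\}$.

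For the lower bound $z\ge s$, suppose $\tilde{D}$ attains an interior minimum $z_0<s$. At that point $\tilde{D}$ lies in $\{z\ge z_0\}$ locally and is tangent to the horosphere $\{z=z_0\}$ from inside the horoball $\{z>z_0\}$. Horoballs are convex, and their boundary horospheres have mean curvature $1$ with respect to the outward normal; the maximum principle applied to a surface tangent to such a horosphere from the concave side forces $|H_D|\ge 1$ at the tangent point, contradicting the assumption. Hence the minimum of $z$ on $\tilde{D}$ is attained on $\partial\tilde{D}$, giving $z\ge s$.

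For the upper bound $z\le s+sl^{2}/4$, the crucial barriers are horospheres based at \emph{finite} boundary points of $H^3$. For each $p_0$ in the $xy$-plane let $B(p_0,R)$ be the Euclidean ball of radius $R$ centered at $(p_0,R)$, i.e.\ the horoball tangent to the plane $\{z=0\}$ at $p_0$, whose highest point sits at $z=2R$. A direct computation shows that the smallest $R$ with $\partial\tilde{D}\subset\overline{B(p_0,R)}$ is $R_{\min}(p_0)=(d(p_0)^{2}+s^{2})/(2s)$, where $d(p_0)=\max_{q'\in\pi(\partial\tilde{D})}|q'-p_0|$ and $\pi$ denotes Euclidean projection to the $xy$-plane. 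Repeating the tangency argument --- were $\tilde{D}\not\subset\overline{B(p_0,R_{\min}(p_0))}$, sliding $R$ upward would produce a first $R^{*}>R_{\min}(p_0)$ at which $\tilde{D}$ touches $\partial B(p_0,R^{*})$ at an interior point from inside the convex horoball, giving $|H_D|\ge 1$ there --- yields $\tilde{D}\subset\overline{B(p_0,R_{\min}(p_0))}$ and hence $z\le 2R_{\min}(p_0)=(d(p_0)^{2}+s^{2})/s$ on $\tilde{D}$, for every choice of $p_0$. Choosing $p_*$ to be the Euclidean circumcenter of $\pi(\partial\tilde{D})$ and using the elementary bound that the circumradius of a closed planar curve is at most half its length (its diameter is at most $L/2$, and the circumradius is at most the diameter), one obtains $d(p_*)\le ls/2$ and therefore $z\le((ls/2)^{2}+s^{2})/s = s+sl^{2}/4$.

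The main thing to keep straight is the sign convention in the maximum principle: horoballs are convex, and a surface tangent to the boundary horosphere from the concave (inside) side has $|H|\ge 1$ at the tangency, whereas a tangency from the convex (outside) side imposes no bound at all. This asymmetry is why two different families of horospheres are needed --- those at $\infty$ to confine $\tilde{D}$ above $\{z=s\}$, and those at finite boundary points to cap its height --- and it is in the second step that the boundary length $l$ enters the estimate, through the circumradius of the planar projection of $\partial\tilde{D}$.
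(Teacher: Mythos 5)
Your proof is correct and follows essentially the same route as the paper's: lift $D$ to a disk $\tilde D$ in the upper half-space with boundary on $\{z=s\}$, use the maximum principle against horospheres $\{z=c\}$ based at $\infty$ to confine $\tilde D$ to $\{z\ge s\}$, and then use a shrinking family of horoballs based at a finite boundary point to cap the height of $\tilde D$, with the length $l$ entering via the Euclidean radius $ls/2$ of a disk in $\{z=s\}$ that contains $\partial\tilde D$. The only cosmetic difference is the choice of horoball center: the paper centers the horoball at a point of $\gamma=\partial\tilde D$ itself, so that the covering radius $\le ls/2$ follows immediately from arc length, whereas you place it at the circumcenter and invoke circumradius $\le$ diameter $\le L/2$; both choices give the same bound $s + sl^2/4$.
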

\begin{proof}
We work in the upper half-space model.
The disk $D$ lifts to a disk $\tilde D$ in $\mathbb H^3$
and its boundary curve lifts to a curve $\gamma$ in the horosphere $ \{ z= s \} $.
Let $E$ be a  disk in the horotorus  $ \{ z= s \} $ of  radius $l/2$ (in the induced flat horotorus metric),  centered on a point of 
$\gamma$.
Since $\gamma$ lies on the horotorus and has hyperbolic length $l$, its length in the flat horotorus
metric is also $l$, and it lies in the interior of $E$. 
In the Euclidean metric on the upper-half-space, $E$ has radius $ls/2$ and is
the intersection of a horoball $B$ of $\mathbb H^3$ with the horosphere $ \{ z= s \} $.
A calculation shows that the horoball  $B$ has Euclidean height $h = s + (sl^2)/4$,
as indicated in Figure~\ref{null}.
 
 \begin{figure}[htbp] 
    \centering
    \includegraphics[width=3in]{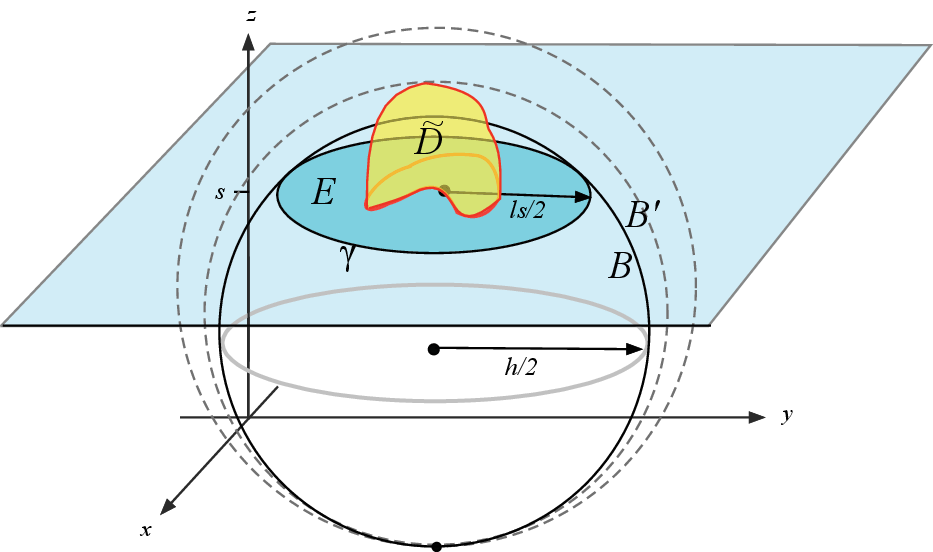} 
    \caption{A minimal disk bounded by a curve  of hyperbolic length $l$ 
    that lies on the horosphere $ \{ z= s \} $ is contained inside a horoball of height $ h=s + ( sl^2)/4$.
    Indicated distances are Euclidean.}
    \label{null}
 \end{figure}

If the interior of $\tilde D$ meets $ \{ z= s \} $, then it tangentially meets some horosphere $ \{ z= s' \} $  with $s' \le  s$ and $s'$ minimal. 
Then it meets $ \{ z= s' \} $  tangentially, without crossing it.  Since the mean curvature of a horosphere $ \{ z= s' \} $ is one,
the mean curvature of $\tilde D$ at the  point of tangency satisfies $|H| \ge 1 $. It follows that if the mean curvature of
 $\tilde D$ is less  than 1 then $D \subset  C_{[s,\infty)}$.
 
If $\tilde D$ is not contained in $B$  then it is contained in a largest horoball $B'$ such that $B \subset B'$ and $\tilde D$ meets $\partial B'$ without crossing it. Again $\tilde D$
 has mean curvature $|H| \ge 1$  at the point where it meets $\partial B'$. Thus  if the mean curvature of $\tilde D$ is less than one, then it lies in the slab $ \{ s \le z \le  s + (sl^2)/4  \} $ and $D$ lies in its 
 quotient $ C_{[s,s +(sl^2)/4]}$.
  \end{proof}

\begin{lemma} \label{trivial2}
Let $D$ be a smooth disk in an ideal cusp $C$ with $\partial D \subset T_s$ and length$(\partial D) = l $ with  $l <2i_s$ and $s \ge 1/2$.
Then either  $D  \subset C_{[s,2s)}$ or $D$ has an interior point where its
mean curvature satisfies $|H| \ge 1$.
\end{lemma}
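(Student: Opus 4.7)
The plan is to derive Lemma~\ref{trivial2} as a direct corollary of Lemma~\ref{trivial1}, using the numerical hypotheses $l<2i_s$ and $s\ge 1/2$ to sharpen the containment depth from $s+sl^2/4$ down to $2s$.

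First I would apply Lemma~\ref{trivial1} to the disk $D$. Its hypotheses are already met (a smooth disk with $\partial D\subset T_s$ of length $l$), so we obtain the dichotomy: either $D$ has an interior point with $|H|\ge 1$ (in which case we are done), or
\[
D\subset C_{[s,\,s+sl^2/4]}.
\]

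The remaining task is to check that, under the extra assumptions of Lemma~\ref{trivial2}, the upper endpoint $s+sl^2/4$ is strictly less than $2s$. Since $i_s=1/(2s)$, the hypothesis $l<2i_s$ reads $l<1/s$, so
\[
\frac{sl^2}{4} < \frac{s}{4}\cdot\frac{1}{s^2} = \frac{1}{4s}.
\]
The second hypothesis $s\ge 1/2$ gives $1/(4s)\le s$, hence
\[
s+\frac{sl^2}{4} < s+\frac{1}{4s} \le 2s.
\]
Combining this with the containment from Lemma~\ref{trivial1} yields $D\subset C_{[s,2s)}$, completing the proof.

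There is no real obstacle here; the lemma is essentially a bookkeeping step that repackages Lemma~\ref{trivial1} in a form suited to later application (presumably to disks whose boundaries sit on a horotorus deep enough that short boundary curves force shallow penetration). The only thing to be careful about is the strict versus non-strict inequalities, and these work out cleanly because $l<2i_s$ is strict, producing the strict upper bound needed to land in the half-open cusp slab $C_{[s,2s)}$.
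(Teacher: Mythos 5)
Your proof is correct and is essentially identical to the paper's: both invoke Lemma~\ref{trivial1} and then verify the arithmetic inequality $s + sl^2/4 < 2s$ from $l < 2i_s = 1/s$ and $s \ge 1/2$. The paper's chain goes $s + sl^2/4 < s + 1/(4s) \le s + 1/2 \le 2s$, which is the same calculation with one more intermediate step; your remark about tracking the strict inequality to land in the half-open slab $C_{[s,2s)}$ is a correct and worthwhile observation.
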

\begin{proof}
Assume $|H|< 1$ and apply Lemma~\ref{trivial1}. Since $i_s = 1/(2s)$ and $ l <2i_s$, 
$$
s +  \frac{sl^2}{4}< s +s (i_s)^2 = s+ \frac{1}{4s} \le s + 1/2  < 2s.
$$
\end{proof}

We now consider a  complete, finite-volume hyperbolic 3-manifold $M$  with a  cusp $C$.
The fundamental group of the cusp is isomorphic to a $\ZZ \oplus \ZZ$-subgroup of 
$\pi_1(M)$  generated by parabolic elements. With the  upper-half space model representing the universal cover of $M$, 
this  $\ZZ \oplus \ZZ$-subgroup can be
conjugated to fix infinity, and thus act as translations.
The covering of $M$ corresponding  to this subgroup is an ideal cusp $C$, which
is foliated by horotori whose injectivity radius approaches 0 as they approach the end of the cusp. 
As before we parametrize the  horotori of
$C_{[s, \infty)}$ so that $T_s$ has injectivity radius $1/(2s)$.
The projection of $C$ to $M$ is injective on $C_{(m, \infty)}$ for some 
$m \le 1$  \cite{Adams}. 
Thus the submanifold $C_{(m, \infty)}$ of $M$ is isometric to a  
submanifold of an ideal cusp cut off by a horotorus.
The  {\em maximal horotorus} $T_m$ lies in the boundary of the cusp. 
It is self-tangent at some number of points, but is the limit of embedded horotori in the cusp. 
For any $s$ with $s>m$ the cusp $C_{(s, \infty)} \subset M$ is isometric to an end of an ideal
cusp and so the results and terminology of Lemmas~\ref{trivial1} and \ref{trivial2} apply in $M$.

We  now  bound how far a surface  can go into a cusp when it is both $(\mu,\lambda)$--quasi-area-minimizing and incompressible.
    
\begin{lemma} \label{surfaceincusp}
Let  $M$ be a finite volume hyperbolic manifold with cusp $C = C_{[1,\infty]}$,   $\mu $ and $\lambda$ constants with  $0  \le \mu < 1, 1 \le  \lambda  $,
and  $F $ a smooth, compact, properly embedded, incompressible,
$(\mu, \lambda)$-quasi-area-minimizing surface in $M$.
If   $s \ge  2 \lambda \mathrm{Area}(T_1) $ then $F \cap C_{[s, \infty)} \subset  C_{[s,   4s]}$. 
 \end{lemma}
\begin{proof} 

We first consider  the case where the intersection $F \cap T_s$ is transverse, and let $F_s$ denote $F \cap C_{[s, \infty)} $.
We can assume $F_s$ is connected, as if not we can consider one component at a time.
Moreover $F_s $  has non-empty boundary, as if closed it would meet a horotorus  $T_u  \subset C_{[s, \infty)} $ with $u$ minimal. They would meet at an interior tangency point of $F_s$  at which its mean curvature satisfies $H(F_s)  \ge 1> \mu$.

Since $F_s$ can be homotoped (rel boundary)  into $T_s$, it follows that
 the  curves in $F  \cap  T_s$ are null-homologous and separate $T_s$ into two subsurfaces.
Each of these has area  less than  
$$  
 \mathrm{Area}(T_s)  =  \frac{\mathrm{Area}(T_1)}{s^2 }. 
 $$
One of these subsurfaces  is  homologous (rel boundary) to  $F_s$.
Comparing areas and using the assumption that $2 \lambda \mathrm{Area}(T_1)  \le s $,  
we see that
$$
 \mathcal{I}_{F_s}  <    \mathrm{Area}(T_s)   =\frac{\mathrm{Area}(T_1)}{ s^2}    \le  \frac{1}{ 2 \lambda s}.
 $$ 
 
Sard's theorem implies that for almost all  $z>s$ the intersection  $F \cap T_z$ is 
transverse, and forms a collection of smooth curves.
Suppose  that for almost all  $z \in [s , 2s)$, the length of the intersection $F  \cap T_z$
satisfies $\mathrm{length}(F \cap T_z)  \ge 2i_z = 1/z$.
Then the area of $F_s$ in the cusp region 
$C_{[s, 2s)} $ can  be bounded from below using the co-area formula,
\begin{equation*}
\mathrm{Area}(F_s  )  \ge   \mathrm{Area}(F_s \cap C_{[s, 2s)})   \ge
\int_{s}^{ 2s} \mathrm{length}(F \cap T_z)   \frac{1}{z} ~ dz \ge  \int_{s}^{2s} \frac{1}{z^2}  ~ dz  = \frac{1}{2s}.
\end{equation*}
Since $F_s$ is $(\mu, \lambda)$-quasi-area-minimizing,
$$
 \mathrm{Area}(F_s)   \le   \lambda  \mathcal{I}_{F_s} <    \lambda    \frac{1}{ 2 \lambda s} = \frac{1}{2s}.
 $$
which is a contradiction.
So    $\mathrm{length}(F_s \cap T_z)  <  2i_z = 1/z$  for some $z \in [s, 2s) $.
For this $z$,  $F_s \cap T_z$ consists of a collection of null-homotopic curves, each shorter than
$2i_z$.   Since  $F$ is incompressible, each of these bounds a subdisk of $F$.
Lemma~\ref{trivial2} then implies that each such subdisk is contained in $C_{[z,2z)}$ and therefore that
$F_s \subset C_{[s,2z)} $.  Since $2z < 4s$ we have $F_s  \subset   C_{[s,4s)}$ and
$F_s \cap T_{4s} = \emptyset$. 
 
Now suppose that $F \cap T_{s}$  is not transverse.
For any $\epsilon >0$ there is an $s_1 \in [s, s + \epsilon]$
such that  $F \cap T_{s_1}$ is transverse.  By the previous argument
$ F \cap  T_{4s_1} = \emptyset$, so $F$ is disjoint from
$T_{ 4(s + \epsilon)}$ for any $\epsilon >0$.  It follows that $F \cap C_{[s, \infty)} \subset  C_{[s,   4s]}$.  
 \end{proof}
 
The next two lemmas estimate for how far a minimal surface can reach into a cusp.
Unlike the previous lemmas, they do not assume that the surface is incompressible,
but instead use the assumption that it has  mean curvature zero. The  monotonicity formula for minimal
surfaces shows
that the area  of $F \cap C_{[s, 2s]}$ is bounded below by $3/32s$. 
This is used in Section~\ref{thmproof}  to
 establish the quasi-area-minimizing property.
 
\begin{lemma} \label{monotonicity}
Let  $M$ be a  hyperbolic 3-manifold with cusp $C$,
$s \ge  8 \mathrm{Area}(T_1) $, and
$F_s \subset C_{[s, 2s]}$ a smooth, properly embedded
minimal surface.
Then either $F_s \cap T_u = \emptyset$ for some $u \in  (s, 2s)$ or
$\mathrm{Area}(F_s) >  3/(32s) $.
 \end{lemma}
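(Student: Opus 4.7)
I will prove the contrapositive: assuming $F\cap T_u\neq\emptyset$ for every $u\in(s,2s)$, I will show $\mbox{Area}(F)>3/(32s)$. Because $F$ is minimal and each horotorus $T_u$ has mean curvature $1$, the maximum principle forbids $F$ from having an interior tangency with any $T_u$; hence for almost every $u\in(s,2s)$ the slice $F\cap T_u$ is a disjoint union of smooth simple closed curves on $T_u$, of total length $L(u)>0$.

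The main case is when $L(u)\ge 2i_u=1/u$ for every $u\in(s,2s)$. In the upper half-space lift the hyperbolic vertical length element is $dz/z$, so the coarea formula gives
$$
\mbox{Area}(F) \ \ge\ \int_s^{2s}L(z)\cdot\frac{dz}{z}\ \ge\ \int_s^{2s}\frac{dz}{z^2}\ =\ \frac{1}{2s}\ >\ \frac{3}{32s},
$$
and the bound is immediate. In the remaining case, there is some $u_0\in(s,2s)$ with $L(u_0)<1/u_0$. Every component of $F\cap T_{u_0}$ is then shorter than the injectivity radius $2i_{u_0}$ and hence null-homotopic on $T_{u_0}$, bounding a small disk on that horotorus. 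The plan here is to derive a contradiction with the standing hypothesis by showing that $F$ must miss some horotorus in $(s,2s)$: via the horoball-barrier construction from the proof of Lemma~\ref{trivial1} (whose argument only uses $|H|<1$ and so applies to the minimal $F$), any disk-component of $F\cap C_{[u_0,2s]}$ whose boundary is such a short null-homotopic loop of length $\ell<1/u_0$ is enclosed in a horoball of ceiling at most $u_0+u_0\ell^2/4<u_0+1/(4u_0)$. Since $s\ge 8\,\mbox{Area}(T_1)\ge 4\sqrt 3$, the collar width $1/(4u_0)$ is much smaller than the slab width, so these disk pieces cannot reach $T_{2s}$.

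The principal obstacle is to complete the exceptional-case argument for components of $F\cap C_{[u_0,2s]}$ that are not disks --- for instance, an annulus with one short null-homotopic boundary loop on $T_{u_0}$ and its other boundary on $T_{2s}$ --- since no single horoball tangent to the $xy$-plane need enclose both boundary loops. The plan is to cut $F$ along all short null-homotopic loops of $F\cap T_{u_0}$, treat each resulting minimal subsurface separately, and iterate the short-loop analysis at any higher pinch level $u_1\in(u_0,2s)$ at which $L(u_1)<1/u_1$; the iteration must terminate either by exhausting the upper extent of $F$ (producing a missed horotorus, i.e.\ the first alternative of the conclusion) or by exposing a regular subinterval $[a,b]\subset(s,2s)$ on which $L\ge 1/u$ holds with $1/a-1/b>3/(32s)$, so that the coarea estimate of the main case closes the argument. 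Carrying out this inductive topological bookkeeping, in particular showing that no combination of short pinches at multiple levels can avoid both alternatives, is the main technical step of the proof.
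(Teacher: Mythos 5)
Your main case (coarea formula with $L(u)\ge 1/u$ on all of $(s,2s)$) is fine, but the exceptional case contains a genuine gap that your iteration plan will not close. Once there is some $u_0\in(s,2s)$ with $L(u_0)<1/u_0$, the horoball barrier from Lemma~\ref{trivial1} only constrains a \emph{disk} component of $F\cap C_{[u_0,\infty)}$ whose entire boundary is a single short null-homotopic loop on $T_{u_0}$. Since $\partial F$ must meet $T_{2s}$, there can be an annular (or higher-genus) component of $F\cap C_{[u_0,2s]}$ with a short null-homotopic boundary loop on $T_{u_0}$ and other boundary on $T_{2s}$; no horoball tangent to the $xy$-plane contains it, so it simply passes through $T_{u_0}$ without being confined. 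Nothing in the lemma's hypotheses (there is no incompressibility or area-minimizing assumption here) rules out such a ``thin tube.'' Your iteration would then try to cut again at some $u_1>u_0$, but if $L(u)<1/u$ for a set of full measure in $(s,2s)$ the coarea integral contributes nothing, the cutting never exposes a usable regular subinterval, and neither alternative of the conclusion is reached. The coarea formula is fundamentally the wrong tool for this case because it only sees the lengths $L(u)$ and these can be uniformly tiny for a genuine minimal surface.

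The paper's proof sidesteps this entirely with a local, topology-blind estimate: the monotonicity formula for minimal surfaces in hyperbolic space, as in Anderson. Assuming $F$ meets every $T_u$ in $(s,2s)$, one chooses $s$ pairwise-disjoint Euclidean balls of diameter $1$, one in each slab $s\le z\le s+1,\dots,2s-1\le z\le 2s$, each centered at a point of $F$; each such ball contains a hyperbolic ball of radius comparable to $1/(2s)$, and monotonicity gives area at least $4\pi\sinh^2$ of half that radius, roughly $3/(16z^2)$ at Euclidean height $z$. Summing over the $s$ slabs yields $\mathrm{Area}(F)>3/(32s)$. This is exactly the bound you need, it is insensitive to the slice lengths $L(u)$ and to how $F$ is assembled out of disks, annuli, or tubes, and it replaces your unfinished ``inductive topological bookkeeping'' step. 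To repair your proof, replace the exceptional-case analysis with the monotonicity argument; once you do so, the main-case coarea computation becomes redundant.
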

 
\begin{proof} 
By standard monotonicity estimates
the area of a minimal surface   passing
through the center of a ball of radius $r$ in hyperbolic space is at least as large as a 
hyperbolic disk of radius $r$, namely $4 \pi \sinh^2( r/2) $  \cite{HS}. We use this to make
some rough approximations to the area of $F$, assuming that 
$F_s \cap T_u \ne \emptyset$ for all $u \in  (s, 2s)$.

A ball of Euclidean diameter one in the upper-half space model that lies
between $z = s$ and $z=s+1$  can be embedded  in the cusp between 
$T_s$ and   $T_{s+1}$ so  its Euclidean center lies at any point in
$T_{s+1/2}$.
Such a ball has hyperbolic diameter given by
$$
\int_s^{s+1} 1/z ~dz = \ln (s+1) - \ln (s) = \ln (1+ 1/s) \ge \frac{1}{s}  - \frac{1}{2s^2}.
$$
Since $ s  \ge  8 \mathrm{Area}(T_1) \ge  4 \sqrt{3} > 4$ we have that the 
hyperbolic diameter is greater than $1/(2s)$.
Thus the radius of this ball is greater than $1/(4s)$ and 
the monotonicity estimate tells us that a minimal surface passing through the ball's center
has area inside the ball that is greater  than 
$$
4 \pi \sinh^2(\frac{1}{4s}) >  \frac{\pi }{16s^2} > \frac{3}{16s^2}.
$$

We can apply this estimate to
$s$ disjoint balls in the cusp of Euclidean radius one, with one ball lying between 
each adjacent pair of the planes $z = s, z=s+1, \dots z=2s$,
and with each ball centered on a point of $F$.
If $F_s$ is not disjoint from any intermediate $T_u$, then this results in a lower bound on the area of $F_s$ of 
$$
 \mathrm{Area}(F_s) >\frac{3}{16}(\frac{1}{s^2}  + \frac{1}{ (s+1)^2}  \dots  + \frac{1}{ (2s)^2} ) >\frac{3}{16}  (\frac{1}{s}  -\frac{1}{2s+1}  )  >  \frac{3}{16}  (\frac{1}{s}  -\frac{1}{2s}  )  =  \frac{3}{32s}
$$
as claimed.
 \end{proof}

We  apply Lemma~\ref{monotonicity} to bound how far an area-minimizing surfaces extends into a cusp.
 
\begin{lemma} \label{minimizer}  
Let  $M$ be a  hyperbolic 3-manifold with cusp $C = C_{[1, \infty)}$,  
$s$ a constant satisfying $s > 14 \mathrm{Area}(T_1) $ and
$F$ a smooth, compact, embedded, area-minimizing surface in  $M$.
Then $F  \cap T_w = \emptyset$ for  $w > 2s$.
 \end{lemma}
\begin{proof} 
 If  $F \cap T_{s} $ or  $F \cap T_{2s} $  are not transverse, 
 replace $s$ with a slightly larger $s'$ with $s < s' < w/2 $, for which 
both $F \cap T_{s'} $ and  $F \cap T_{2s'} $ are transverse.
Now  $F$  is compact and homotopic out of the cusp, and therefore  $F \cap C_{[s, \infty]} $ is separating 
in $ C_{[s, \infty]} $ and
$F_s = F \cap C_{[s, 2s]} $ is properly embedded and separating in $C_{[s,2s]} $.
If  for all $u \in  [s, 2s]$ $F_s \cap T_{u} \ne \emptyset$  then Lemma~\ref{monotonicity} implies that
$$
 \frac{3}{32s}   <  \mathrm{Area}(F_s).
$$
Since $F_s$ is separating in the cusp,  the curves of intersection of $F \cap T_s $ separate $T_s$ into two subsurfaces
and similarly the curves in $F \cap T_{2s} $ separate $T_{2s}$.
We can compare the area of  $F_s$ to that of the homologous surface  with the same boundary formed by the union of subsurfaces of $T_{s}  \cup T_{2s} $. 
Then
$$
\mathrm{Area}(F_s) < \mathrm{Area}(T_s) + \mathrm{Area}(T_{2s})
 =  \frac{5\mathrm{Area}(T_1)}{4s^2}.
$$
Combining these inequalities  gives 
$$
 \frac{3}{32s} < \mathrm{Area}(F_s)  <\frac{  5  \mathrm{Area}(T_1)}{4s^2}
$$
implying that
$   s  <(40/3) \mathrm{Area}(T_1) < 14  \mathrm{Area}(T_1)$. This contradicts the assumption that $s > 14 \mathrm{Area}(T_1) $, so 
 $F \cap T_u = \emptyset$ for some $u \in  [s, 2s]$.  We also have that $F \cap T_u = \emptyset$ for all $w>u$,
 since otherwise   $F$ would meet a horotorus at a point where its mean curvature would be greater than 0. In particular
$F  \cap T_w = \emptyset$ for  $w > 2s$.
 \end{proof} 
 
\section{Bundles with short geodesics} \label{sec:bundles}

In this section we use Thurston's Dehn Surgery Theorem \cite{Thurston, BenedettiPetronio, HodgsonKerckhoff, MoriahRubinstein} 
to construct a  sequence of hyperbolic 3-manifolds $ M_j$  that fiber 
over the circle and  limit to a hyperbolic 3-manifold with a cusp.
These manifolds contain embedded geodesics that are homotopic into a fiber and whose length approaches zero
as $j \to \infty$.

The complement of a simple closed geodesic in a closed hyperbolic 3-manifold $ M_0$
is the interior of  a manifold $M$ with torus boundary.
The manifold $M$ is atoroidal and irreducible and therefore satisfies the hypothesis of
Thurston's geometrization theorem for Haken manifolds. 
The absence of essential spheres and tori can be seen by
considering the lift of such a surface to the complement of a collection
of geodesic lines in the universal cover of $M$.  Alternately spheres and non-peripheral tori can
be ruled out by noting that it is possible to explicitly construct a negatively curved
metric on the complement \cite{Kojima}. It follows that
$M$ has a complete, finite volume hyperbolic metric
with a single cusp.

The notion of geometric convergence allows for comparing
a non-compact manifold to a sequence of compact manifolds that resemble it on increasingly large
subsets.  A sequence of pointed Riemannian manifolds $(M_j, p_j)$ converges geometrically to a pointed manifold
$(M, p)$  if for every compact subset $K \subset M$ with $p \in K$ there is a sequence of smooth maps 
$f_j: (K , p)  \to   (M_j  , p_j) $  such that  the pulled back Riemannian metrics $f_j^*(g_j)$ 
converge smoothly to the hyperbolic metric on $K \subset M$  \cite{BenedettiPetronio, BiringerSouto}.
 This is roughly illustrated in Figure~\ref{fig:cusp1} for $K$ the compact submanifold of $M$ cut off by a horotorus $T_s$.
 
 \begin{figure}[htbp] 
    \centering
    \includegraphics[width=4in]{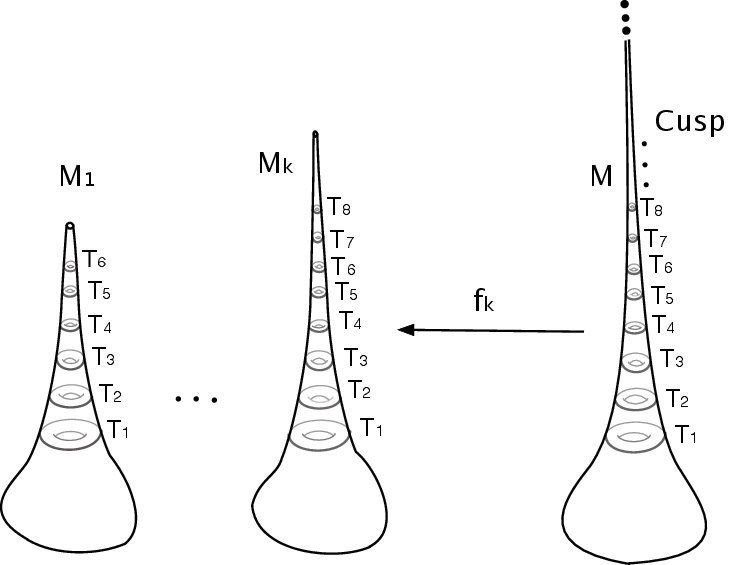} 
    \caption{Geometric convergence implies that  $K_s$, the compact submanifold of $M$ below a fixed horotorus $T_s$, increasingly resembles the submanifold of $M_j$  below $f_j(T_s)$} 
    \label{fig:cusp1}
 \end{figure}

We now  construct a  
sequence of manifolds $M_j$, each of which fibers over the circle, that geometrically converge to a cusped manifold $M$.
We fix a genus $g>1$.
\begin{lemma} \label{construction}
There exists a sequence of hyperbolic 3-manifolds $M_j,  j = 1,2,\dots$ with the following properties:
\begin{enumerate}
\item  For each $j \ge 1$, $M_j$ fibers over the circle with fiber a surface of genus $g$.
\item The manifolds $M_j$  converge geometrically
to a cusped hyperbolic manifold $M$ as  $j\to \infty$.
\item  There is a sequence of
closed geodesics $\gamma_j \subset M_j$,
homotopic into a fiber of $M_j$, with 
$ \lim_{j \to \infty} \mathrm{length}(\gamma_j) = 0.$
\item  The horotorus $T_1$ with injectivity radius $1/2$ that bounds the cusp of $M$  is mapped by
$f_j$ to a torus $f_j(T_1)$  that encloses a radius $R_j$  tubular neighborhood of
the geodesic $\gamma_j$, with $\lim_{j \to \infty}  R_j  = \infty$.
\end{enumerate}
\end{lemma}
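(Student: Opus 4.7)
The plan is a drill-and-fill construction starting from a carefully chosen fibered hyperbolic 3-manifold. First I would choose a closed hyperbolic 3-manifold $N$ fibered over $S^1$ with pseudo-Anosov monodromy, together with a simple closed curve $\beta\subset N$ that is null-homologous in $N$ and transverse to the fibration. Setting $M = N\setminus N(\beta)$, the complement is irreducible and atoroidal, so Thurston's hyperbolization for Haken manifolds (together with Kojima's negatively curved metric argument to rule out essential spheres and tori) equips $M$ with a complete finite-volume hyperbolic metric with a single cusp. The fibration of $N$ restricts to a fibration of $M$ over $S^1$ whose fibers are copies of the fiber $\Sigma$ of $N$ punctured at the intersection points with $\beta$; the associated cohomology class $\omega\in H^1(M;\RR)$ lies in the interior of an open fibered face of the Thurston norm polytope.

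Next I would apply Thurston's Hyperbolic Dehn Surgery Theorem. Parametrize slopes on the cusp torus using the natural meridian $\mu$ of $\beta$ and the longitude $\lambda$ parallel to $\beta$. Along any sequence of slopes $\mu_j = p_j\mu+q_j\lambda$ whose length on the cusp tends to infinity, the Dehn filled manifolds $M_j$ are hyperbolic for large $j$ and converge geometrically to $M$, with cores $\gamma_j$ that are embedded closed geodesics of length tending to zero. Standard Margulis-tube estimates around short geodesics give tube radii $R_j\to\infty$, and the geometric-convergence maps $f_j$ carry the horotorus $T_1$ of $M$ onto the tube boundary about $\gamma_j$ in $M_j$, yielding conclusion (4).

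The central task is to select the slopes $\mu_j$ so that $M_j$ both fibers over $S^1$ and has null-homologous core. Fiberedness will hold whenever $\mu_j$ lies in the kernel of some integer class in the cone over the fibered face of $M$; openness of this cone provides an open arc of admissible slopes on the cusp containing infinitely many rational points, each giving a fibration of $M_j$ by a descended class $\omega_j$. Null-homologicity of $\gamma_j$ uses the hypothesis $[\beta]=0$ in $H_1(N)$, which forces the relation $[\lambda]=n[\mu]$ in $H_1(M)$ for the Seifert-framing integer $n$ of $\beta$. The equation $[\gamma_j]=0$ in $H_1(M_j) = H_1(M)/\langle[\mu_j]\rangle$ then reduces to a linear divisibility condition on $(p_j,q_j)$ whose solutions form an infinite sequence accumulating projectively at the Seifert-framing slope. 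Choosing $(N,\beta)$ so that this Seifert framing lies in the interior of the fibered arc lets both conditions be satisfied simultaneously for infinitely many $j$.

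For any such $j$ the fibered class $\omega_j\in H^1(M_j;\ZZ)$ paired with $[\gamma_j]=0$ gives $\omega_j(\gamma_j)=0$, so $\gamma_j$ lies in $\ker(\omega_j:\pi_1(M_j)\to\ZZ)$, which is the fiber subgroup $\pi_1(\Sigma_j)$; hence $\gamma_j$ is freely homotopic into a fiber, finishing (3) and (1). The main obstacle is the compatibility required in the previous paragraph: arranging the Seifert framing of $\beta$ to lie in the interior of the fibered-slope arc. This is a generic condition but must be checked in examples, and can be secured by a judicious initial choice of $(N,\beta)$ guided by Thurston's explicit description of fibered faces.
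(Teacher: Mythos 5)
Your proposal takes a genuinely different route from the paper. The paper starts from a closed surface $F_g$ with curve systems $C, D$ satisfying Penner's criterion, in which $c_1 \in C$ is chosen to be \emph{separating on the fiber}. The monodromies $f_j$ differ only in the number of positive twists about $c_1$; the mapping tori $M_j$ are then all obtained from one another by $1/(k{-}1)$ surgery along $c_1$, and the limit $M$ is the complement of $c_1$ in $M_1$. This makes the two crucial properties automatic: every $M_j$ fibers (Penner gives pseudo-Anosov for free), and the surgery core is homotopic to $c_1$, which is null-homologous because $c_1$ separates the fiber. Your approach instead drills a null-homologous curve $\beta$ \emph{transverse} to the fibers and tries to choose refilling slopes to recover both conditions afterward. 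This is a legitimate drill-and-fill strategy, but it relocates all the difficulty to the compatibility you flag at the end.

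That compatibility is a genuine gap, not just a cosmetic loose end. As you correctly compute, null-homologicity of the core forces the filling slopes to be of the form $\pm 1/q$ in the Seifert framing, so the candidate slopes accumulate exactly at the Seifert longitude $\lambda$. For infinitely many of these fillings to fiber, you need the degeneracy slopes of the integral classes in some fibered cone to sweep out an arc in $H^1(\partial M;\RR)$ whose closure contains $\lambda$. Having $b_1(M)\ge 2$ is necessary but nowhere near sufficient; the image of the fibered cone in $H^1(\partial M)$ can be a single ray (if the boundary map collapses the face), in which case at most one filling is fibered. Producing $(N,\beta)$ with $\lambda$ interior to the arc of fibered slopes is the whole content, and ``generic'' does not discharge it -- you would need either an explicit example or a soft argument (e.g.\ via Fried's work on fibered faces together with a surgery description) that the condition can be met. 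The paper sidesteps this entirely by placing the surgery curve in the fiber: then the relevant surgeries are twist surgeries, the filled manifolds are tautologically mapping tori, Penner certifies pseudo-Anosov, and ``separating in the fiber'' gives null-homologicity with no slope bookkeeping. Your argument for (3), that $\omega_j(\gamma_j)=0$ forces $\gamma_j$ into the fiber subgroup, and your use of Thurston Dehn surgery for (2) and (4), are both fine; the construction step is where the proposal falls short of a proof.
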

\begin{proof}
Thurston showed that a  surface bundle over $S^1$ with 
fiber a surface of genus $g \ge 2$  is hyperbolic if the monodromy is pseudo-Anosov.
A construction of Penner shows that if $C$ and $D$ are two  collections of disjoint embedded
 essential closed curves (with no parallel components) in an oriented surface
$F$ that intersect efficiently and fill $F$, and if $\phi$ is a product of positive Dehn twists along
$C$ and negative Dehn twists along $D$ that twists along each curve at least once, then $\phi$ is pseudo-Anosov \cite{Penner}.
Take a closed surface $F_g$ of genus $g \ge 2$ and on $F$  a pair of curve collections as above
 in which $c_1 \in C$. 
Consider the sequence of pseudo-Anosov maps  $\phi_j$
obtained by composing a fixed number of positive Dehn twists 
along each curve in $C$ other than $c_1$, a fixed number of
negative Dehn twists along each curve in $D$, and finally $j$ positive twists along $c_1$. 
 Then  $\phi_j$  is pseudo-Anosov for each $j \ge 1$, and 
 the manifold  $M_j$ formed by constructing a surface
 bundle over $S^1$  with monodromy $\phi_j$ is hyperbolic.
  
The bundle $M_j$  is obtained from $M_1$ by  
$1/(j-1)$-Dehn surgery on $c_1$. 
This surgery first removes a solid-torus neighborhood $N$ of $c_1$ in $M_1$, giving a 3-manifold $M$ that admits a 
complete hyperbolic metric with a cusp. We call a curve on $\partial N$ 
lying in a fiber a {\em longitude} and a curve bounding a disk in $N$  a {\em meridian}. The surgery
attaches a solid-torus $S^1 \times D^2$ to $\partial N$ so that a meridian
is mapped to a curve homotopic to one meridian and $(j-1)$-longitudes. 
Thurston's Dehn Surgery Theorem shows that  the hyperbolic manifolds $M_j$ 
converge geometrically to $M$ as $j \to \infty$,
 that the core curve of the solid torus attached by the Dehn surgery
is isotopic to a closed  geodesic $\gamma_j \subset M_j$ and that $ \lim_{j \to \infty} \mathrm{length}(\gamma_j) = 0$.
Moreover for any fixed $s$, the maps $f_j$ carry a horotorus  $T_s$ in the cusp of $M$ to the boundaries of radius $R_j$  tubular neighborhoods   of
the geodesic $\gamma_j$, with $\lim_{j \to \infty}  R_j  = \infty$.   
\end{proof}  
 
 \section{Non-existence of quasi-area-minimizing fibrations} \label{thmproof}

We now present the proof of  Theorem~\ref{noqam}.
Given constants $0  \le \mu < 1, 1 \le  \lambda  $,
we show that when $j$ is sufficiently large it is not possible to isotop all
the fibers on the hyperbolic manifolds $M_j$ constructed in Lemma~\ref{construction}
to be simultaneously $(\mu, \lambda)$-quasi-area-minimizing.

We let $M(s)$ denote the compact manifold that is the complement in $M$ of $C_{(s,\infty)}$, for each $s \ge 1$,
and  $M_j(s)$ denote  $f_j(M_s)$. 

\begin{proof} [Proof of Theorem \ref{noqam}]
Assume that  
the  manifolds $M_j $  in Lemma~\ref{construction}
admit a $(\mu, \lambda)$-quasi-area-minimizing fibration for infinitely many values of $j$. We will derive a contradiction.

In the construction of  $M_j$ in Lemma~\ref{construction} 
the torus $ f_j(T_1)$ separates $M_j$ into a solid torus
$N_j \subset M_j$   with core a geodesic $\gamma_j $ and 
 a compact manifold $M_j(s)$.
Moreover  the radius of $N_j$ satisfies  $R_j  = d(f_j(T_1),  \gamma_j) \to \infty$. 

Fix the constant $s_0 = 16 \lambda  \mathrm{Area}(T_1)$.
The sequence of manifolds $\{ M_j \}$ geometrically converges to the cusped hyperbolic manifold $M$,
 so for $j$ sufficiently large the maps
  $f_j :  M(2s_0)  \to  M_j (2s_0) $ are embeddings that converge to an isometry.
So the pulled back hyperbolic metrics $f_j^*(g_j)$ 
converge smoothly to the hyperbolic metric on $M(2s_0)$.  

We now look for  a fiber in $M_j$  that comes close to $\gamma_j \subset M_j$ but does not intersect $\gamma_j $.
It is not clear that such a fiber exists, since it is possible that every fiber might intersect $\gamma_j $.
However we can always find a component $X_j$ of the intersection of a fiber with the solid torus $N_j$
 that has this property.
 
\begin{claim}
For  $j$ sufficiently large, there is a fiber $F_j \subset M_j$
and a component $X_j$ of  
$F_j  \cap N_j$ 
such that $X_j \cap f_j (T_1) \ne \emptyset$,  $X_j \cap f_j (T_{s_0+2}) \ne \emptyset,$  but 
$X_j \cap f_j (T_{s_0+3}) = \emptyset$. Moreover $X_j$ is either
a boundary parallel disk or an essential annulus in $N_j$.
\end{claim}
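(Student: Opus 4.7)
My plan is to combine a topological obstruction coming from the hypothesis that $\gamma_j$ is freely homotopic into a fiber with the geometric bounds of Section~2, transferred from the cusped manifold $M$ to $M_j$ via the geometric convergence of Lemma~\ref{construction}. First I would choose $j$ large enough that the map $f_j$ restricted to $C_{[1, 2s_0+2]}$ is an almost-isometry onto its image in $M_j$, with error so small that the proofs of Lemmas~\ref{trivial2}--\ref{4s} go through verbatim on surfaces contained in $f_j(C_{[1,2s_0+2]})$, with $T_s$ replaced by $f_j(T_s)$ throughout. The strict inequalities $a<1$ and $b<2$ provide the slack needed to absorb this error, so that the $(a,b)$-quasi-area-minimizing fiber $F_j$ satisfies the $(1,2)$-quasi-area-minimizing hypothesis of the cusp lemmas in this region.

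Next, I would analyze the components of $F_j \cap V_j$ where $V_j := f_j(C_{[1,\infty)})$. Since $F_j$ is incompressible in $M_j$ (a standard fact for fibers of surface bundles over $S^1$), an innermost-disk analysis on $\partial V_j = f_j(T_1)$ reduces each component, up to surgery that does not affect the quasi-area-minimizing property, to a boundary-parallel disk, a meridian disk, a boundary-parallel annulus, or an annulus whose core is an essential curve on $F_j$ homotopic to $\gamma_j$. For the first three types every boundary curve bounds a disk in $V_j$, so the $M_j$-version of Lemma~\ref{4s} applies to each such component individually and forces it to stop before $f_j(T_{s_0})$. Thus any component of $F_j \cap V_j$ that meets $f_j(T_{s_0})$ must be either a boundary-parallel disk at the marginal case of the estimate or an essential annulus of the last type — exactly the dichotomy appearing in the claim.

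To produce a deep-reaching component, I would use that $\gamma_j$ is freely homotopic in $M_j$ into $F_j$: the loop $\alpha \subset F_j$ representing this class is essential in $\pi_1(M_j)$, and pulled close to $\gamma_j$ by the free homotopy it forces some component of $F_j \cap V_j$ to wrap around $\gamma_j$ as an essential annulus. To localize the depth to the interval $[s_0, s_0+1)$, I would vary $t\in S^1$ and track, for each $t$, the deepest horotorus met by the essential-type component of $F_t \cap V_j$. This depth function is piecewise continuous in $t$; by the adapted Lemma~\ref{4s} some fibers fail to reach $f_j(T_{s_0+1})$, while the topological constraint above forces some fibers to reach $f_j(T_{s_0})$. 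An intermediate-value argument then produces $t_*$ and a component $X_j$ with depth in the required narrow window.

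The main obstacle will be executing this continuity argument carefully: as $t$ varies, the components of $F_t \cap V_j$ can merge, split, and change topological type, so I must ensure that the ``essential'' depth function does indeed take a value in the narrow interval $[s_0, s_0+1)$. The allowance in the claim of both a boundary-parallel disk and an essential annulus is precisely tailored to the two topological types that can appear on either side of such a transitional moment.
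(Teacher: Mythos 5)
Your proposal takes a genuinely different route from the paper, and the differences matter: there is a gap in your continuity argument that the paper's construction is specifically designed to avoid.

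The paper proves the Claim purely topologically and geometrically, and crucially it does \emph{not} invoke the area estimates of Lemmas~\ref{trivial1}--\ref{4s} at this stage. Those lemmas are applied only afterwards, to $Y_j = f_j^{-1}(X_j)$ in the cusped limit manifold $M$, where they hold cleanly without any error terms. Your plan to re-prove those lemmas in $M_j$ with ``almost-isometry'' error absorbed by the slack $a<1$, $b<2$ is extra work that the paper structures around, and it also introduces a subtle circularity: you need the shallowness of some fibers in order to run the intermediate-value argument that locates $X_j$, but that shallowness is exactly the kind of conclusion the overall theorem is building towards. The paper keeps the Claim self-contained, using only the mean curvature bound $|H| < a < 1$ (for the disk-containment argument via horoball tangency) and the topology of fibered $3$-manifolds.

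The central device you are missing is the passage to the infinite cyclic cover $\tilde M_j \cong F_g \times \RR$. Because $\gamma_j$ is homotopic into a fiber, it lifts to a \emph{compact} loop $\tilde \gamma_j$ in $\tilde M_j$, and the tube $E_j$ lifts to a compact solid torus $\tilde E_j$. The fibers in the cover are parametrized by $\RR$, not $S^1$. Fibers at large $|t|$ are arbitrarily far from $\tilde\gamma_j$; some fiber meets $\tilde\gamma_j$; and the distance from a fiber to $\tilde\gamma_j$ is a genuine continuous function on $\RR$, so the intermediate value theorem applies with no ambiguity. One simply chooses a fiber whose intersection with $\tilde E_j$ reaches a lift of $f_j(T_{s_0})$ but not of $f_j(T_{s_0+1})$, then projects to $M_j$. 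Working downstairs with $t\in S^1$, as you propose, is exactly where your ``main obstacle'' arises: the depth of the ``essential-type component'' is not a well-defined continuous function of $t$, components of $F_t \cap V_j$ can merge and split, and there is no canonical way to track a single component across transitions. Your acknowledgment of this difficulty is honest, but it is not a minor technicality --- it is the heart of the matter, and the infinite cyclic cover is the paper's mechanism for eliminating it.

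Two further points. First, your ``up to surgery that does not affect the quasi-area-minimizing property'' step is not available here: $F_j$ must remain an honest fiber of the fibration, so you cannot compress or surger it. The paper instead shows directly, using the mean curvature bound and the classification of incompressible surfaces in $F_g \times I$, that the component $X_j$ is either a boundary-parallel disk or a vertical (essential) annulus. Second, you attribute the disk possibility to a ``marginal case of the estimate,'' but in the paper it arises for the purely topological reason that incompressible surfaces in a product $F_g\times I$ with boundary on one side are either horizontal (boundary-parallel) or vertical (annular); the estimates play no role in producing this dichotomy.
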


\begin{proof}
The surface bundle $M_j$ has an infinite cover $\tilde M_j $ 
homeomorphic to the product  $F_g \times  \RR$ of a 
surface of genus $g$ with $\RR$.
Each fiber in  $M_j $ lifts to a fiber in  $\tilde M_j $.  
The curve $\gamma_j $ is homotopic into a fiber, and so lifts to a 
loop $\tilde \gamma_j \subset \tilde M_j $
 homotopic into a fiber of $ \tilde M_j $.
Therefore there are fibers in $\tilde M_j $ that intersect $\tilde \gamma_j$ 
and fibers that are arbitrarily far from  $\tilde \gamma_j$. 
The solid torus $N_j $ lifts to a solid torus $ \tilde N_j \subset \tilde M_j $ of radius $R_j$.
By continuity $ \tilde N_j$ intersects fibers in $ \tilde M_j $ in components
whose distance from $\tilde \gamma_j$ varies from zero to $R_j$.
Let $\tilde X_j$ be a component of $\tilde F_j \cap \tilde N_j$
that intersects a lift of  $ f_j (T_{s_0+2}) \subset N_j$ to $ \tilde M_j $  but not a lift of 
$ f_j (T_{s_0+3})$ and let $X_j$ be the projection of $\tilde X_j$ to $M_j$.
Then $X_j $ intersects  $ f_j (T_{s_0+2})$  but not
$ f_j (T_{s_0+3})$, as claimed.  Note that  $X_j $ must intersect   $f_j (T_1)$ as
otherwise it would be contained in the interior of   $ f_j(C_{[1, s_0+3)})$
and tangent to   $ f_j (T_{s}) $ for some $s>1$.  This would imply that
the mean curvature $|H(X_j)| $ of  $X_j $ at the tangency point is at least as large as
 $ |H( f_j (T_{s}) )|$, which is larger than $\mu$,
while $X_j $ has mean curvature  $|H(X_j)| \le \mu $.
Thus $X_j \cap f_j (T_1) \ne \emptyset$,  $X_j \cap f_j (T_{s_0+2}) \ne \emptyset$  and 
$X_j \cap f_j (T_{s_0+3}) = \emptyset$. 

It remains to show that $X_j$ is  either a boundary parallel disk or an essential annulus in 
$f_j(C_{[1,\infty)})$.

Suppose first  that $\alpha \subset \partial   X_j$ is a null homotopic curve on
$\partial  N_j$. Since the fiber  $ F_j $ is incompressible and $X_j \subset F_j$,  $\alpha$ is the boundary
of a disk $D \subset F_j$.  We claim that $D \subset N_j$. If not, then
$D$ protrudes outside $ f_j(C_{[1,\infty)})$ and intersects the interior of $M_j(1)$.  
Now consider the lift $\tilde D$ of $D$ to
the cover of $M_j$ given by the cyclic  subgroup of $\pi_1(M_j)$ generated by
$\gamma_j$. The disk $\tilde D$ has boundary on 
the boundary of an $R_j$ neighborhood of $\tilde \gamma_j$ 
and has an interior point in the  complement of this neighborhood.  At a point where
it is farthest away from $\tilde \gamma_j$ the mean curvature of $\tilde D$ is
greater than the mean curvature of the boundary of a constant radius
tubular neighborhood of the geodesic $\tilde \gamma_j$. The mean curvature of
such a tubular neighborhood boundary is always greater than one.
Since $D \subset  F_j$ has mean curvature $|H| \le \mu < 1$, this gives a contradiction unless $D  \subset N_j $. 

We now show that $ X_j$ is incompressible in $N_j $. 
If not, there is a nontrivial
compressing disk $G$ for $ X_j$ with $G \subset N_j$. Since $ F_j$ is incompressible, $\partial G$
bounds a disk on $ F_j$, which must protrude out of $N_j$.  But  such a disk must have a point where
its mean curvature is greater than one, as  in the preceding argument, contradicting that $F_j$ has mean curvature
$|H(F_j) | \le \mu < 1$.
So $X_j$ is incompressible in $N_j $ and disjoint from $  f_j (T_{s_0+3}) $  and therefore from $\gamma_j$. 
An incompressible surface in a solid torus  that misses the core of the solid torus
 is a disk or annulus and boundary parallel.
So $X_j$ is either  a boundary parallel disk or a boundary parallel essential annulus in $N_j$,
and $X_j$ satisfies the  claimed properties.
\end{proof}  
  
The geometric convergence of $M_j$ to $M$ implies that the area forms and the second fundamental forms of the 
surfaces $X_j$ at $p_j \in X_j$ converge to those of $Y_j = f_j^{-1}(X_j)$ at  $ f_j^{-1}(p_j) \in Y_j$.  
Given constants  $(\mu', \lambda')$ satisfying  $\mu < \mu' < 1$ and  $\lambda < \lambda' $,  
we  show that $Y_j$ is a  $(\mu', \lambda')$-quasi-area-minimizing surface for  $j$ sufficiently large.

Since the mean curvature of $X_j$ satisfies  $|H(X_j) | \le  \mu < 1$ and $ H(X_j) \to H(Y_j)$
we have that  $|H(Y_j) | \le  \mu' < 1$ for $j$ sufficiently large. 

We next check that $Y_j$  is $\lambda'$-quasi-area-minimizing for $j$ sufficiently large.
Take any compact subsurface  $U_j \subset Y_j$ and a 
surface $V_j \subset M$ homologous to  $U_j \mbox{ (rel }  \partial U_j)$.
We need to show $  \mbox{Area} (U_j)  \le  \lambda'  \mbox{Area} (V_j)  $.
It suffices to choose $V_j$ to be an area-minimizing surface with boundary equal to $\partial U_j$.
Then Lemma~\ref{minimizer}
implies that  $V_j \subset  C_{[1,2s_0]}$.
If  $  \mbox{Area} (U_j)  > \lambda' \mbox{Area} (V_j)  $, then since the $f_j$ converge to an isometry
on $C_{[1,2s_0]}$,
 $\mbox{Area} (f_j(U_j)) > \lambda \mbox{Area} (f_j(V_j))$ for $j$ large.  This contradicts the assumption that 
 $X_j$ is  $(\mu, \lambda)$-quasi-area-minimizing.
We conclude that  for $j$ sufficiently large $Y_j$ is $(\mu', \lambda')$-quasi-area-minimizing.

The surface  $Y_j \subset C_{[1,\infty]}$ is either a boundary parallel disk or a boundary parallel  annulus, and therefore incompressible in $C_{[1,\infty]}$.
Lemma~\ref{surfaceincusp} implies that if   $s \ge  2 \lambda \mathrm{Area}(T_1) $ then $Y_j \cap C_{[s, \infty)} \subset  C_{[s,   4s]}$. 
Taking  $s =  4 \lambda \mathrm{Area}(T_1) = s_0/4$, Lemma~\ref{surfaceincusp}  implies that  $Y_j \cap C_{[s, \infty)} \subset  C_{[s,   4s]} = C_{[s,   s_0]}$.
In particular  $Y_j$  does not intersect $ T_{s_0+1} $, for large $j$. 
But the maps $f_j$ converge smoothly to an isometry on $M(2s_0)$, and
since  $X_j \cap f_j (T_{s_0 +2}) \ne \emptyset $, for large $j$ we have
$Y_j \cap  T_{s_0+1} \ne \emptyset$, so that
$Y_j$ intersects $ T_{s_0+1} $ for all $j$ sufficiently large.

This gives a contradiction.  We conclude that at most finitely many of the sequence of 
manifolds $M_j$ admit a fibration by $(\mu,  \lambda)$-quasi-area-minimizing fibers, proving Theorem~\ref{noqam}.
\end{proof}

 \noindent
{\bf Remark.} For mean curvature zero fibers, which are area minimizing,  curvature estimates show that
a subsequence of the  surfaces $Y_j = f_j^{-1}(X_j)$ converges to a limiting surface $Y$~\cite{Schoen}.
This allows for a simpler proof of Theorem~\ref{noqam}.
However such curvature bounds do not extend to the case of  quasi-area-minimizing surfaces.
 
\section{Questions}

Some basic questions remain open.
 \begin{enumerate} 
\item Is there a closed hyperbolic 3-manifold with a  fibration whose fibers have mean curvature $|H| \le 1$?  Note that no fibration exists with the
absolute value of the principal curvatures bounded above by one,  since the limit curve of a fiber is a space-filling curve.
 \item Is there a closed hyperbolic 3-manifold with a minimal fibration?
  \item Is there a closed negatively curved 3-manifold with a minimal fibration?  Note that the arguments of this paper are stable under perturbation of
  the hyperbolic metric, so that there exist fibered 3-manifolds that admit no metric with
  pinched negative sectional curvature close to -1  and a  quasi-area-minimizing fibration.
 \end{enumerate}

\end{document}